\newcolumntype{L}[1]{>{\raggedright\arraybackslash}p{#1}} 
\newcolumntype{C}[1]{>{\centering\arraybackslash}p{#1}} 
\newcolumntype{R}[1]{>{\raggedleft\arraybackslash}p{#1}} 
\DeclareMathOperator{\SX}{\mathcal{X}}
\DeclareMathOperator{\SSum}{\mathcal{S}}
\DeclareMathOperator{\R}{\mathbb{R}}
\DeclareMathOperator{\N}{\mathbb{N}}
\DeclareMathOperator{\FG}{\mathsf{G}}
\DeclareMathOperator{\FV}{\mathsf{V}}
\DeclareMathOperator{\FE}{\mathsf{E}}
\providecommand{\abs}[1]{\lvert#1\rvert}
\DeclareMathOperator*{\argmin}{arg\,min}
\newcommand{\pluseq}{\mathrel{{+}{=}}}
\newcommand{\minuseq}{\mathrel{{-}{=}}}
\newcommand\blfootnote[1]{%
  \begingroup
  \renewcommand\thefootnote{}\footnote{#1}%
  \addtocounter{footnote}{-1}%
  \endgroup
} 
\begin{document}
  
  %
  %
  \def\SSVM17SubNumber{1}
  %
  %
  \title{A Novel Convex Relaxation for Non-Binary Discrete Tomography}
  
  %
  %
  %
  \author{Jan Kuske$^\dagger$, Paul Swoboda$^\ddagger$ and Stefania Petra$^\dagger$}
  \institute{$^\dagger$MIG, Inst. Appl. Mathematics, Heidelberg University \\
  $^\ddagger$ Institute of Science and Technology (IST), Austria }
  
  \maketitle

  \begin{abstract}
    We present a novel convex relaxation and a corresponding inference algorithm for the non-binary discrete tomography problem, that is, reconstructing discrete-valued images from few linear measurements. 
    In contrast to state of the art approaches that split the problem into a continuous reconstruction problem for the linear measurement constraints and a discrete labeling problem to enforce discrete-valued reconstructions, we propose a joint formulation that addresses both problems simultaneously, resulting in a tighter convex relaxation.
    For this purpose a constrained graphical model is set up and evaluated using a novel relaxation optimized by dual decomposition.
    We evaluate our approach experimentally and show superior solutions both mathematically (tighter relaxation) and experimentally in comparison to previously proposed relaxations.
  \end{abstract}

\blfootnote{
  \noindent \textbf{Acknowledgments:} We gratefully acknowledge support by the DFG (German Science Foundation), Grant GRK 1653. This work is partially funded by the
  European Research Council under the European Unions 7th Framework Programme (FP7/2007-2013)/ERC grant
  agreement no 616160. The authors would like to thank Vladimir Kolmogorov
  for helpful discussions.
} 

\section{Introduction}

We study the discrete tomography problem, that is reconstructing a discrete-valued image from a small number of linear measurements (tomographic projections), see Figure~\ref{fig:discrete-tomography} for an illustration.
The main difficulty in reconstructing the original image is that there are usually far too few measurements, making the problem ill-posed.
Hence, it is common to search for a discrete-valued image that (i)~satisfies the measurements and (ii)~minimizes an appropriate energy function.

\begin{figure}[t]
        \floatbox[{\capbeside\thisfloatsetup{capbesideposition={right,center},capbesidewidth=0.55\textwidth}}]{figure}[\FBwidth]
        {\caption{Examplary discrete tomography example on three-valued image (white=$0$, gray=$1$, black=$2$) with three projection directions: horizontal, vertical and diagonal (right upper to lower left).
Horizontal sums are written on the right, vertical sums on the top and diagonal sums on the left and bottom.
        }}
        {\includegraphics[width=0.35\textwidth]{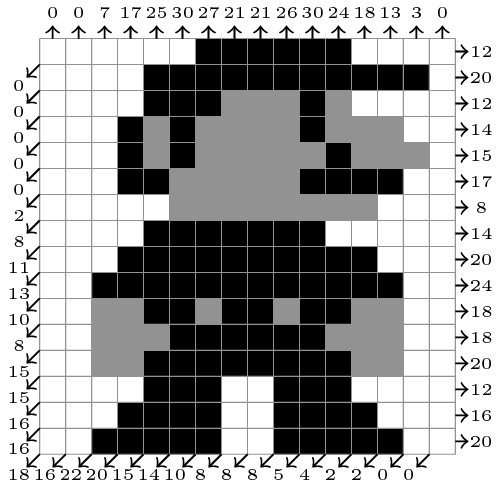}}
   \label{fig:discrete-tomography}
\end{figure}
More generally, the discrete tomography problem can be regarded as reconstructing a \emph{discrete-valued synthesis/analysis-sparse} signal from few measurements which is observed by \emph{deterministic} sensors $A$.
This is, in turn, a special instance of the compressed sensing problem~\cite{Foucart.2013}, for which it has been shown that discreteness constraints on the possible values of the reconstructed function can significantly reduce the number of required measurements~\cite{Keiper2016}.

However, the discreteness constraint leads to great computational challenges.
Simple outer convex relaxations coming from continuous scenarios are doomed to fail, as they will not output discrete solutions, unless the signal sparsity satisfies favourable relations and $A$ is well-conditioned on the class of sparse signals, e.g. a random matrix.
In fact, in most practical scenarios the projection matrices fall short of assumptions that underlie rigorous compressed sensing theory (like e.g.~the restricted isometry property \cite{Foucart.2013}), 
and standard algorithms from the continuous $\ell_{1}$-setting cannot be applied any more. 
Rounding continuous solutions will on the other hand render the solutions infeasible for the measurement constraints.

Therefore, algorithms exploiting the \textit{combinatorial structure} of the discrete tomography problem are necessary to successfully exploit discreteness as prior knowledge and to reduce the number of required measurements.

\paragraph{Related work.}
Several algorithms have been proposed to solve the discrete tomography problem.
Among them are
(i)~linear programming-based algorithms~\cite{TomoGC,LinearProgrammingBinaryTomographyWeber},
(ii)~belief propagation~\cite{DiscreteTomographyBP},
(iii)~network flow techniques~\cite{NetworkFlowBinaryTomographyBatenburg},
(iv)~convex-convave programming~\cite{DiscreteTomographyZisler,DiscreteTomographyConvexConcave}
(v)~evolutionary algorithms~\cite{batenburg2005evolutionary}
and other heuristic algorithms~\cite{DART,BinaryTomographyTriplaneCardiography,ParameterEstimationGibbsBinaryTomography,GaussMarkovPottsBinaryTomography}.
Not all approaches are applicable to the general discrete tomography problem we treat here:
algorithms~\cite{TomoGC,LinearProgrammingBinaryTomographyWeber,DiscreteTomographyBP,BinaryTomographyTriplaneCardiography,ParameterEstimationGibbsBinaryTomography,GaussMarkovPottsBinaryTomography} only support binary labels, while~\cite{DART,NetworkFlowBinaryTomographyBatenburg} solves only the feasiblity problem and does not permit any energy.
Algorithms~\cite{DiscreteTomographyZisler,DiscreteTomographyConvexConcave} are applicable to the setting we propose but are purely primal algorithms and do not output dual lower bounds (which we do).
Hence, one cannot judge proximity of solutions computed by~\cite{DiscreteTomographyZisler,DiscreteTomographyConvexConcave} to global optimal ones, prohibiting its use in branch and bound.
In case convex relaxations were considered~\cite{TomoGC,DiscreteTomographyZisler,LinearProgrammingBinaryTomographyWeber,NetworkFlowBinaryTomographyBatenburg}, they were less tight than the one we propose, leading to inferior dual bounds.

\paragraph{Contribution.}
We propose the (to our knowledge) first LP-based algorithm for the non-binary discrete tomography problem.
In particular, we
\begin{itemize}
  \item recast the discrete tomography problem as a Maximum-A-Posteriori inference problem in a graphical model with additional linear constraints coming from the tomographic projections in Section~\ref{sec:model},
  \item construct higher order factors in the graphical model such that a feasible solution to the higher order factors coincides with solutions feasible for the tomographic projections and
  \item present an efficient exact algorithm for solving the special case when exactly one ray constraint is present and the energy factorizes as a chain
    (such a problem will be called a one-dimensional discrete tomography problem) in Section~\ref{sec:one-dim-dt},
  \item decompose the whole problem into such subproblems and solve this decomposition with bundle methods in Section~\ref{sec:Algorithm}.
\end{itemize}
Our approach leads to significantly tighter bounds as compared to generalising previously proposed relaxations to the non-binary case, see Proposition~\ref{lemma:local-polytope-not-tight} in Section~\ref{sec:Algorithm} and experiments in Section~\ref{sec:Experiments}.
Code and datasets are available on GitHub\footnote{\url{https://github.com/pawelswoboda/LP_MP}}.

\paragraph{Notation.}
Let $[a,b] = \{a,\ldots,b\}$ be the set of natural numbers between $a$ and $b$.
For $x \in \R$ we denote by $\lfloor x \rfloor$ and $\lceil x \rceil$ the floor and ceiling function.

\section{Problem Statement}
\label{sec:model}
The discrete tomography problem we study consists in finding a discrete labeling $x \in \{0,1,\ldots,k-1\}^n$ such that
(i)~tomographic projection constraints given by $Ax = b$ with $A \in \{0,1\}^{m \times n}, b \in \N^m$ are fulfilled and
(ii)~$x$ minimizes some \emph{energy} $E : \{0,1,\ldots,k-1\}^{n} \rightarrow \R$.
We assume that $E$ factorizes according to a pairwise graphical model:
given a graph $\FG = (\FV,\FE)$, together with a \emph{label space} $\SX_{\FV} := \prod_{u \in \FV} \SX_u$, $\SX_u := \{0,1\ldots,k-1\}$ $\forall u \in \FV$, the energy is a sum of \emph{unary potentials} $\theta_u: \SX_u \rightarrow \R$ $\forall u \in \FV$ and \emph{pairwise} ones $\theta_{uv} : \SX_u \times \SX_v \rightarrow \R$ $\forall uv \in \FE$.
The full problem hence reads
\begin{equation}
  \label{eq:DiscreteTomography}
  \min_{x \in \SX_{\FV}} E(x) := \sum_{u \in \FV} \theta_{u}(x_u) + \sum_{uv \in \FE} \theta_{uv}(x_u,x_v) \quad \text{s.t.} \quad Ax = b\,.
\end{equation}

For the discrete tomography problem we usually choose $\FG$ to be a grid graph corresponding to the pixels of the image to be reconstructed, zero unary potentials $\theta_u \equiv 0$ $\forall u \in \FV$, \textbf{as no local information about the image values is known}, and pairwise potentials $\theta_{uv} = g(x_u - x_v)$ penalize intensity transitions, e.g. $g(\cdot) = \abs{\cdot}$ (TV) or $g(\cdot) = \min(1, \abs{\cdot})$ (Potts).
Such choice of pairwise potentials assigns small energy to labelings $x$ with a regular spatial structure.

\section{One-Dimensional Non-Binary Discrete Tomography}
\label{sec:one-dim-dt}
A natural decomposition of the discrete tomography problem~\eqref{eq:DiscreteTomography} consists of (i)~considering a subproblem for each ray constraint separately and (ii)~joining them together via Lagrangian variables.
We will study the first aspect below and the second one in the next section.
In particular, let $U = \{u_1,\ldots,u_n\} \subseteq \FV$ be the variables from a single ray constraint $x_{u_1} + \ldots + x_{u_n} = b$ corresponding to a row of the projection matrix $A$ in~\eqref{eq:DiscreteTomography}. 
Assume that pairwise potentials form a chain, i.e.\ they are $\theta_{u_i u_{i+1}}$, $i=1,\ldots,n-1$.
The \emph{one-dimensional discrete tomography problem} is
\begin{equation}
   \label{eq:ProjectionRay}
\begin{split}
    \min_{(x_1,\ldots,x_n) \in \SX_U} &  \sum_{u \in U} \theta_u(x_u) + \sum_{i \in [n-1]} \theta_{u_i,u_{i+1}}(x_{u_i},x_{u_{i+1}})  \\
    \text{s.t. }  & \sum_{u \in U} x_{u} = b\,. 
\end{split}
\end{equation}
We present an exact linear programming relaxation and an efficient message-passing routine to solve~\eqref{eq:ProjectionRay} below.

\subsection{Linear Programming Model}
The one-dimensional discrete tomography subproblem~\eqref{eq:ProjectionRay} could naively be solved by dynamic programming by going over all variables $u_1,\ldots,u_n$ sequentially.
This however would entail quadratic space complexity in the number of nodes in $U$, as the state space for variable $u_i$ would need to include costs for all possible labels $x_{u_{i}}$ and all values of the intermediate sum $\sum_{j=1}^{i-1} x_{u_j}$.
The latter sum can have $1+\sum_{j=1}^{i-1} (\abs{\SX}-1)$ possible values.
To achieve a better space complexity, we will recursively (i)~equipartition variables $u_1,\ldots,u_n$, (ii)~define LP-subproblems in terms of so-called \emph{counting factors} which are exact on each subpartition and (iii)~join them together to eventually obtain an exact LP-relaxation for~\eqref{eq:ProjectionRay}.
Our approach is inspired by~\cite{RecursiveCardinalityModels}.

\paragraph{Partition of variables.}
Given  the nodes $u_1,\ldots,u_n$, we choose an equipartition $\Pi_1 = \{u_1,\ldots,u_{\lfloor\nicefrac{n}{2}}\rfloor\}$ and $\Pi_2 = \{u_{\lfloor \nicefrac{n}{2}\rfloor+1} ,\ldots,u_n\}$.
We recursively equipartition $\Pi_1$ into $\Pi_{1,1}$ and $\Pi_{1,2}$ and do likewise for $\Pi_2$.
For $u_1,\ldots,u_8$ we obtain a recursive partitioning as in Figure~\ref{fig:SubSumIllustration}.

\begin{figure}
  \centering
  \includegraphics[width=0.45\linewidth]{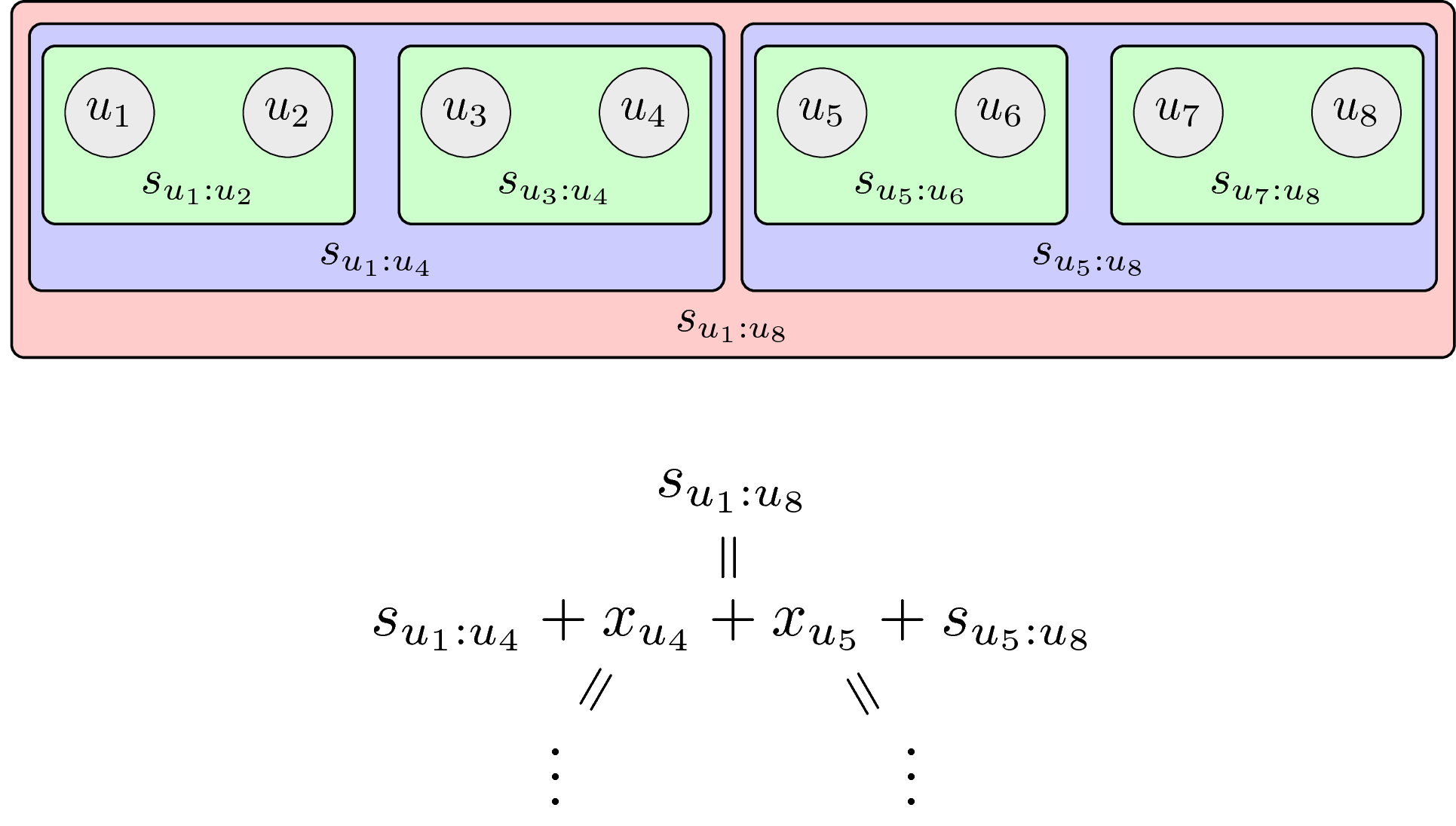}
  \caption[caption]{ 
  \emph{Partition of variables}:
  The chain of variables $U=\{u_1,\ldots,u_8\}$ (\textcolor{red}{red boxes}) is first partitioned into $\Pi_1 = \{u_1,u_2,u_3,u_4\}, \Pi_2 = \{u_5,u_6,u_7,u_8\}$ (\textcolor{blue}{blue boxes}), and then into $\Pi_{1,1} = \{u_1,u_2\}, \Pi_{1,2} = \{u_3,u_4\}, \Pi_{2,1} = \{u_5,u_6\}, \Pi_{2,2} = \{u_7,u_8\}$ (\textcolor{green}{green boxes}). \\\hspace{\textwidth}
  \emph{Counting sums:}
  The topmost sum $s_{u_1:u_8} = x_{u_2} + \ldots + x_{u_7}$ is composed of sums $s_{1:4}$ for partition $\Pi_1$ and $s_{5:8}$ for $\Pi_2$. Subsums are generated recursively for $s_{1:4}$ and $_{5:8}$ again.
  }
\label{fig:SubSumIllustration}
\end{figure}

\paragraph{Counting factors.}
Given an interval, a \emph{counting factor} holds the states of its left and right end and the value of the intermediate sum.
\begin{definition}[Counting label space]
  The counting label space for interval $[i,j]$ is $\SX_{i:j} := \SX_{u_i} \times \SSum_{i:j} \times \SX_{u_j}$ with $\SSum_{i:j} = \{0,1,\ldots,1+\sum_{l=i+1}^{j-1} (\abs{\SX_{u_l}}-1) \}$ holding all possible intermediate sums.
  A \emph{counting label} $x_{i:j}$ consists of the three components $(x_{u_i}, s_{i:j} , x_{u_j})$: its left endpoint label $x_{u_i}$, intermediate sum $s_{i:j} := x_{u_{i+1}} + \ldots + x_{u_{j-1}}$ and right endpoint label $x_{u_j}$.
\end{definition}
See again Figure~\ref{fig:SubSumIllustration} for the exemplary case $U = \{u_1,\ldots,u_8\}$. 

For interval $[i,j]$ there are $\abs{\SX_{i:j}} = \abs{\SX_{u_i}} \cdot \abs{\SX_{u_j}} \cdot \abs{\SSum_{i:j}}$ distinct counting labels.
We associate to each counting factor \emph{counting marginals} $\mu_{i:j}$ satisfying $\{\mu_{i:j} \in \R_+^{\abs{\SX_{i:j}}} : \sum_{x_{i:j} \in \SX_{i:j}} \mu_{i:j}(x_{i:j}) = 1\}$.

Assuming an uniform label space $\abs{\SX_u} = k$ $\forall u \in \FV$, the total space complexity of all counting factors is $O(k^3 \cdot n \cdot \log(n))$, hence subquadratic in the number of nodes in $U$.

\paragraph{Joining counting factors.}
Assume the partitioning of variables has produced two adjacent subsets $\Pi = \{u_i,\ldots,u_j \}$ and $\Pi' = \{u_{j+1},\ldots,u_{l}\}$, which were constructed from their common subset $\Pi \cup \Pi' \subseteq U$. 
The associated three counting factors with marginals $\mu_{i:j}, \mu_{j+1:l}$ and $\mu_{i:l}$ introduced above shall be consistent with respect to each other.

\begin{definition}[Label consistency]
  Label $x_{i:l} \in \SX_{i:l}$, $x_{i:j} \in \SX_{i:j}$ and $x_{j+1:l}$ are \emph{consistent} with each other, denoted by $x_{i:j},x_{j+1:l} \sim x_{i:l}$ iff 
(i)~left endpoint labels of $x_{i:j}$ and $x_{i:l}$ match, (ii)~right endpoint labels of $x_{j+1:l}$ and $x_{i:l}$ match and
(ii)~intermediate sums match $s_{i:l} = s_{i:j} + x_{u_{j}} + x_{u_{j+1}} + s_{j+1:l}$.
\end{definition}
We enforce this by introducing a \emph{higher order marginal} $\mu_{i:j:l} \in \R_{+}^{\SX_{i:j} \times \SX_{j+1:l}}$ to bind together $\mu_{i:j}, \mu_{j+1:l}$ and $\mu_{i:l}$.
\begin{align}
  \label{eq:CountingFactorConstraintsLeft}
    \sum_{x_{j+1:l}} \mu_{i:j:l}(x_{i:j},x_{j+1:l}) & = \mu_{i:j}(x_{i:j}) & \forall x_{i:j} \in \SX_{i:j} \\
  \label{eq:CountingFactorConstraintsRight}
    \sum_{x_{i:j}} \mu_{i:j:l}(x_{i:j},x_{j+1:l}) & = \mu_{j+1:l}(x_{j+1:l}) & \forall x_{j+1:l} \in \SX_{j+1:l} \\
  \label{eq:CountingFactorConstraintsUp}
    \sum_{x_{i:j},x_{j+1:l} \sim x_{i:l} }\mu_{i:j:l}(x_{i:j},x_{j+1:l}) & = \mu_{i:l}(x_{i:l}) & \forall x_{i:l} \in \SX_{i:l}
\end{align}
The recursive arrangement of counting factors is illustrated in Figure~\ref{fig:TreeIllustration}.
\begin{remark}
The constraints between $\mu_{i:j:l}$ and $\mu_{i:j}$ and $\mu_{j+1:l}$ are analoguous to the marginalization constraints between pairwise and unary marginals in the local polytope relaxation for pairwise graphical models~\cite{Werner.2007}.
The constraints between $\mu_{i:j:l}$ and $\mu_{i:l}$ however are different.
  Hence, specialized efficient solvers for inference in graphical models cannot be applied.
\end{remark}

\begin{figure}[t]
  \centering
  \includegraphics[width=0.5\linewidth]{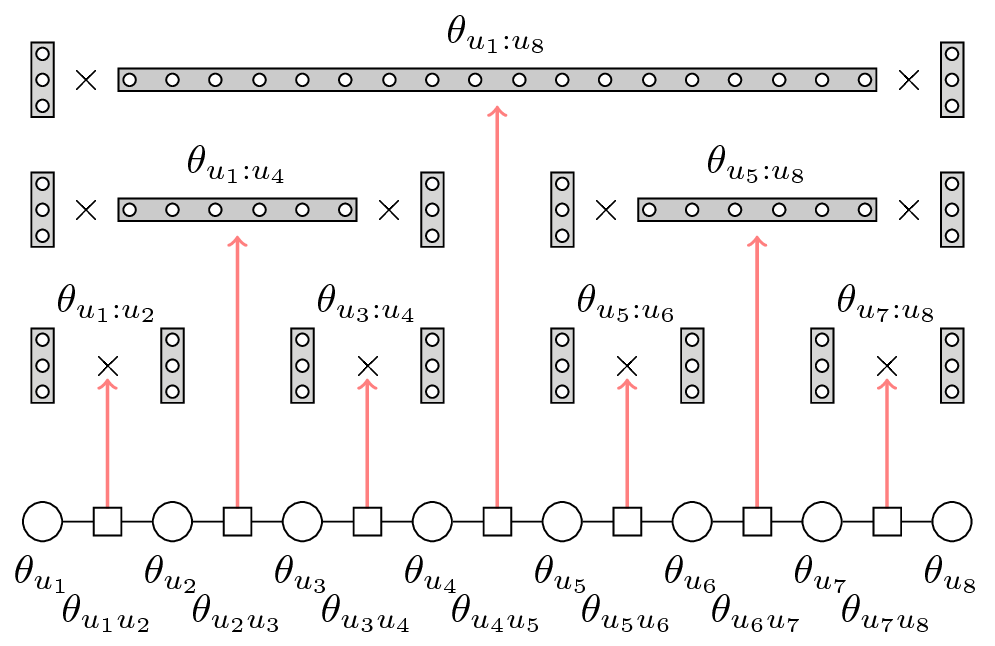}
  \caption{ \label{fig:TreeIllustration}
    Illustration of the tree construction used for solving the one-dimensional tomography problem~\eqref{eq:ProjectionRay} with $\ell = 3$, i.e. three labels.
    On the bottom is the original chain, above it are three layers of counting factors.
    Each counting factor 
    keeps track of the label $x_u$ (left vertical bar), $x_v$ (right vertical bar) and the intermediate sum $\sum_{w \in U: u < w < v} x_w$ (middle horizontal bar).
  }
\end{figure}
\paragraph{Costs.}
Above we have described the polytope for the one-dimensional discrete tomography problem~\eqref{eq:ProjectionRay}.
The LP-objective consists of vectors $\theta_{i:j}$ for each counting marginal and $\theta_{i:j:l}$ for each higher order marginal.
Accounting for the pairwise costs in~\eqref{eq:ProjectionRay} we set $\theta_{i:j}(x_{i:j}) := \begin{cases} \theta_{u_i u_j}(x_{u_i},x_{u_j}), & i+1=j \\ 0,& \text{otherwise} \end{cases}$ for the counting factors and
for the higher order factors we set $\theta_{i:j:l}(x_{i:j},x_{j+1:l}) := \theta_{u_{j}u_{j+1}}(x_{u_j},x_{u_{j+1}})$.
For the projection constraint in~\eqref{eq:ProjectionRay} we set costs of the top counting marginal as $\theta_{1:n}(x_{1:n}) := \begin{cases} 0, & x_{u_1} + s_{1:n} + x_{u_n} = b \\ \infty,& \text{otherwise} \end{cases}$.
\subsection{Message Passing Algorithm}
Above we have introduced a linear program formulation for the one-dimensional discrete tomography problem~\eqref{eq:ProjectionRay}.
While it is possible to solve it with a standard LP-solver, doing so would be slow.
As the counting factors and the higher order marginals connecting them form a tree, it is possible to devise a message passing algorithm optimizing~\eqref{eq:ProjectionRay} exactly.
First, this implies that the linear programming relaxation for~\eqref{eq:ProjectionRay} is exact, as message passing amounts to optimizing the Lagrangian dual of this same relaxation.
Second, marginals do not need to be held explicitly, holding messages is enough. The size of all messages equals the size of all counting factors, hence giving again subquadratic space complexity.

Message passing for~\eqref{eq:ProjectionRay} is detailed in Algorithm~\ref{alg:message-passing-1d}.
It proceeds by first computing \emph{up messages} from adjacent fine subsets to coarser subsets (i.e.\ going up the tree in Figure~\ref{fig:TreeIllustration}) and afterwards computing \emph{down messages} from coarse subsets to their equipartition (i.e.\ going down the tree in Figure~\ref{fig:TreeIllustration}).
Messages reparametrize costs of counting and higher order factors.

\paragraph{Reparametrization}
Let indices $i<j<l$ be given, where $[i:j]$, $[j+1:l]$ and $[i:l]$ are subsets generated by the recursive partitioning.
Let messages $\phi_{i:j:l}^{\leftarrow}, \phi_{i:j:l}^{\rightarrow}, \phi_{i:j:l}^{\uparrow}$ correspond to constraints~\eqref{eq:CountingFactorConstraintsLeft},~\eqref{eq:CountingFactorConstraintsRight} and~\eqref{eq:CountingFactorConstraintsUp} respectively.
Messages $\phi$ act on (reparametrize) costs $\theta$ as
\begin{align}
  \theta^{\phi}_{i:j:l}(x_{i:j},x_{j+1:l}) &\pluseq \phi^{\leftarrow}_{i:j:l}(x_{i:j}) & \theta^{\phi}_{i:j}(x_{i:j}) &\minuseq  \phi^{\leftarrow}_{i:j:l}(x_{i:j}) \\
  \theta^{\phi}_{i:j:l}(x_{i:j},x_{j+1:l}) &\pluseq \phi^{\rightarrow}_{i:j:l}(x_{j+1:l}) & \theta^{\phi}_{j+1:l}(x_{j+1:l}) &\minuseq  \phi^{\rightarrow}_{i:j:l}(x_{j+1:l}) \\
  \theta^{\phi}_{i:j:l}(x_{i:j},x_{j+1:l}) &\pluseq \phi^{\uparrow}_{i:l}(x_{i:l}) & \theta^{\phi}_{i:l}(x_{i:l}) &\minuseq  \phi^{\uparrow}_{i:j:l}(x_{i:l})
\end{align}
\begin{algorithm}
  \caption{Message passing for one-dimensional discrete tomography}
  \label{alg:message-passing-1d}
  \textbf{Up messages:} \\ 
  \For{$[i,j] \cup [j+1,l] = [i,l]\in \Pi$ \text{in ascending order}} {
    $\begin{matrix}
      \phi^{\leftarrow}_{i:j:l} &=& \theta_{i:j}\\ \phi^{\rightarrow}_{i:j:l} &=& \theta_{j+1:l} \\
    \end{matrix}$
  \tcp{Send messages to higher order counting factor $\theta_{i:j:l}^{\phi}$}
  \label{eq:MessageUpFromHigherOrder}
  $ \phi^{\uparrow}_{i:j:l}(x_{i:l}) = \hspace{-0.2cm} \min\limits_{x_{i:j}, x_{j+1:l} \sim x_{i:l}} \theta_{i:j:l}^{\phi}(x_{i:j},x_{j+1:l}) $
  \parbox{4.3cm}{
    \tcp{Send~message~from~higher}
    \tcp{order~to~counting~factor~$\theta_{i:l}^{\phi}$}}
  } 
  $x_{1:n}^* \in \argmin_{x_{1:n} \in \SX_{1:n}} \theta_{1:n}(x_{1:n})$ 
  \tcp*[r]{optimum of top counting factor} 
  \textbf{Down messages:}  \\
  \For{$[i,j] \cup [j+1,l] = [i,l]\in \Pi$ in descending order} {
    $\phi_{i:j:l}^{\uparrow} = \theta_{i:j:l} $ \tcp*[r]{Send message to higher order factor}
  $  x_{i:j}^*,x_{j+1:l}^* \in \argmin\limits_{ x_{i:j},x_{j+1:l}  \sim x_{i:l}^* } \theta_{i:j:l}^{\phi}(x_{i:j},x_{j+1:l}) $ \tcp*[r]{compute optimal labels} 
  }
\end{algorithm}
\paragraph{Fast message computation}
Naively computing one up messages would result in time complexity $O(\ell^5 \cdot n^2)$, which would make the algorithm prohibitively slow.
We will describe a fast message computation technique for~\eqref{eq:MessageUpFromHigherOrder}, which uses the structure of the corresponding linear constraints~\eqref{eq:CountingFactorConstraintsUp} and relies on the latent factorization of $\theta_{i:j:l}^{\phi}$.
Specifically, when we fix the endpoints $x_{u_i}, x_{u_j}$ of interval $[i,j]$ and $x_{u_{j+1}}, x_{u_l}$ of $[j+1,l]$,~\eqref{eq:MessageUpFromHigherOrder} becomes
\begin{equation}
  \label{eq:MinSumConvolution}
  \phi^{\uparrow}_{i:j:l}(x_{i:l}) = \min_{s_{i:j} + s_{j+1:l} = s_{i:l}-x_{u_{j}} - x_{u_j} } \theta_{u_j,u_{j+1}}(x_{u_j},x_{u_{j+1}}) + \phi^{\leftarrow}_{i:j}(x_{i:j}) + \phi^{\rightarrow}_{j+1:l}(x_{j+1:l})\,,
\end{equation}
Problem~\eqref{eq:MinSumConvolution} is an instance of the min-sum convolution problem: Given $a,b \in \R^n$, compute $c \in \R^{2n-1}$, where $c_{i} = \min_{j \leq i} (a_j + b_{i-j})$.
This can be seen by replacing $\phi^{\leftarrow}$ by $a$, $\phi^{\rightarrow}$ by $b$ and noting that $\theta_{u_j, u_{j+1}}$ is a constant, as $x_{u_j}$ and $x_{u_{j+1}}$ were fixed.
For the min-sum convolution problem efficient algorithms~\cite{BussieckMaximalConvolution} were proposed with expected running time $O(n \log(n))$ under the assumption that sorting $a$ and $b$ results in permutations occurring with uniform probability.
Problem~\eqref{eq:MinSumConvolution} can be efficiently computed by performing $O(\ell^4)$ min-sum convolutions (one convolution for every choice of endpoints).
\begin{remark}[Comparison to~\cite{RecursiveCardinalityModels}]
  While our approach for solving~\eqref{eq:ProjectionRay} is inspired by~\cite{RecursiveCardinalityModels}, it is notably different:
  (i)~our model includes pairwise potentials forming a chain, while~\cite{RecursiveCardinalityModels} assumes that pairwise potentials do not occur between neighboring subsets. This necessitates to store left and right endpoints in counting factors.
  (ii)~\cite{RecursiveCardinalityModels} optimizes a different objective: they solve the sum-product version of~\eqref{eq:ProjectionRay} (i.e.\ they exchange min by $+$ and $+$ by $\cdot$ in~\eqref{eq:ProjectionRay}).
  This allows~\cite{RecursiveCardinalityModels} to use fast Fourier transforms for message computations, instead of the harder min-sum convolution problems.
\end{remark}

\section{Discrete Tomography Graphical Model}
\label{sec:Algorithm}
The discrete tomography problem~\eqref{eq:DiscreteTomography} consists of m = \#rows$(A)$ distinct one-dimensional subproblems~\eqref{eq:ProjectionRay}.
We connect all subproblems~\eqref{eq:ProjectionRay} via Lagrangian variables into one large problem.
This procedure is called dual decomposition, see~\cite{DDforInference} for an introduction.
Specifically, in our discrete tomograpy problems subproblems only share variables $v \in \FV$, but not edges $e \in \FE$ (shared edges can be handled analoguously).
Then for each node $u \in \FV$ which participates in the $i$-th subproblem, we introduce the Lagrangian variable $\lambda_{i,u} \in \R^{\abs{\SX_u}}$.
The $i$-th subproblem then consists of solving~\eqref{eq:ProjectionRay} with the subset of variables $U_i$, where the unary potentials are the Lagrangian variables $\theta_u = \lambda_{i,u}$.
We denote its energy by $E_{i}(\cdot | \lambda_i)$.
The overall problem is
\begin{equation}
  \tag{CTG}
  \label{eq:dt-decomposition}
  \max_{\lambda_1,\ldots,\lambda_m} \sum_{i=1}^m \min_{x \in \SX_{U_i}} E_i(x | \lambda_i) \quad \text{s.t.} \quad \sum_{i} \lambda_{i,u} \equiv 0 \quad \forall u \in \FV\,.
\end{equation}
An exemplary  $4\times 4$ model with eight subproblems coming from two projection directions can be seen in Figure~\ref{fig:JointGraphicalModel}.

\begin{SCfigure}
   \centering
    \includegraphics[width=0.37\linewidth]{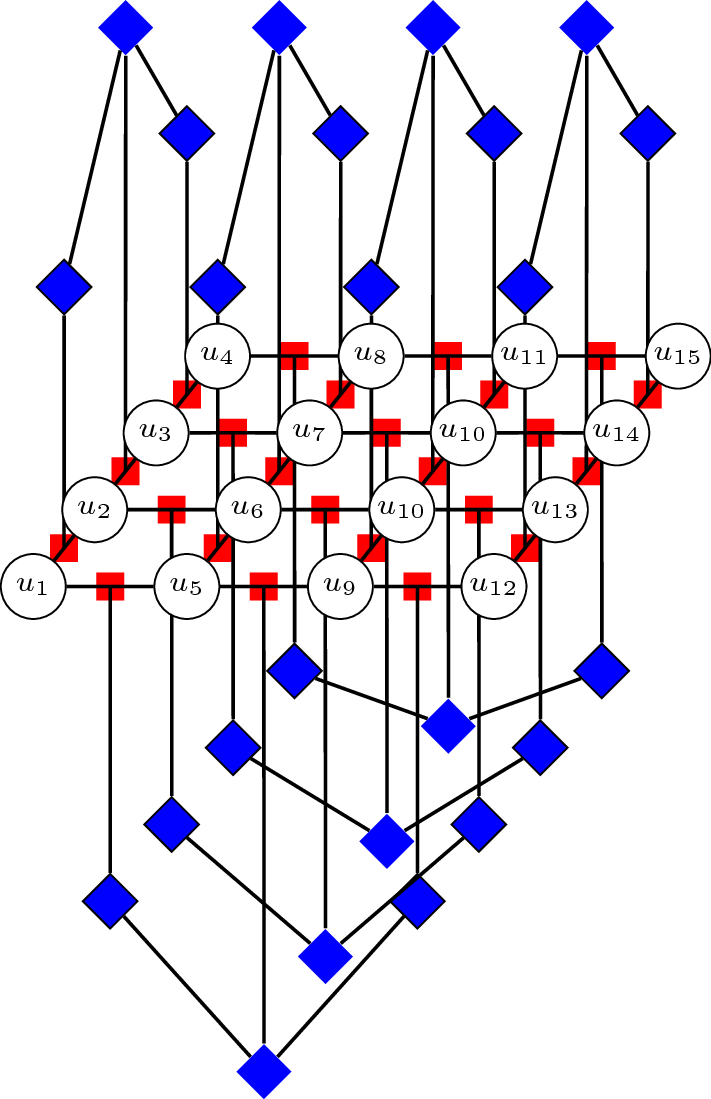}
      \caption{
      Illustration of a complete graphical model for the discrete tomography problem with projections along rows and columns of the underlying $4 \times 4$ grid.
      Graphical model consisting of submodel for energy $E$ in~\eqref{eq:DiscreteTomography}, indicated by unary (circles) and pairwise (red rectangles) potentials on the $4 \times 4$ grid,
      and counting factors (blue diamond) for the projection constraints $Ax = b$ using the tree construction above the grid (horizontal projection) and below (vertical projection).
      The higher order potentials are left out for the sake of clarity of presentation.
   }
   \label{fig:JointGraphicalModel}
\end{SCfigure}

\paragraph{Optimization of relaxation~\eqref{eq:dt-decomposition}.}
To maximize~\eqref{eq:dt-decomposition} we use the bundle solver ConicBundle\footnote{\url{https://www-user.tu-chemnitz.de/~helmberg/ConicBundle/}} to find optimal Lagrangian variables $\lambda$ and Algorithm~\ref{alg:message-passing-1d} to find solutions to the one-dimensional subproblems.
The bundle method will only give us a dual lower bound to the value of the optimal reconstruction.

\paragraph{Primal solution.}
To obtain a feasible reconstruction, we solve a reduced problem by excluding labels with high cost:
Given dual variables $\lambda_i$, let $x^*$ be the optimal solution to the $i$-th subproblem on variables $U_i \subseteq \FV$. For each label $x_u \in \SX_u, u\in U_i$, $x_u \neq x_u^*$ we compute the energy $x'^* \in \argmin_{\{x' \in \SX_{U} : x_u = x'_u\}} E_i(x' | \lambda_i)$ of the minimal reconstruction for subproblem $i$ when the label at $u$ is fixed to $x_u$ (this value can be read off from the reparametrization output by Algorithm~\ref{alg:message-passing-1d}).
Only if the gap $E_i(x'^* | \lambda_i) - E_i(x^* | \lambda_i)$ is smaller than some given threshold, we consider the label $x_u$.
We construct the discrete tomograpy problem on this reduced set of possible labelings and solve the problem with CPLEX~\cite{cplex}.

\paragraph{Comparison to previously used relaxation.}
It can be shown that the algorithms in~\cite{TomoGC,DiscreteTomographyZisler,LinearProgrammingBinaryTomographyWeber,NetworkFlowBinaryTomographyBatenburg} use the following relaxation.
\begin{equation}
  \tag{STD}
  \label{eq:local-polytope-dt}
  \begin{array}{rl}
    \min_{\mu \geq 0} & \sum_{u \in \FV} \langle \theta_u, \mu_u \rangle + \sum_{uv \in \FE} \langle \theta_{uv}, \mu_{uv} \rangle \\
    \text{s.t.} 
    & \sum_{x_u \in \SX_u} \mu_u(x_u) = 1 \quad \forall u \in \FV \\
    & \begin{array}{c} 
    \sum_{x_u \in \SX_u} \mu_{uv}(x_u,x_v) = \mu_{v}(x_v) \quad \forall x_v \in \SX_v \\ 
    \sum_{x_v\in \SX_v} \mu_{uv}(x_u,x_v) = \mu_{u}(x_u)  \quad \forall x_u \in \SX_u
    \end{array} \quad \forall uv \in \FE \\
      & \sum_{u \in \FV} A_{iu} \cdot \left(\sum_{x_u \in \SX_u} x_u \cdot \mu_u(x_u) \right) = b_i \quad i=1,\ldots,m \,.
  \end{array}
\end{equation}
This relaxation~\eqref{eq:local-polytope-dt} is the straigtforward generalization of the local polytope relaxation~\cite{Werner.2007} to the discrete tomography problem.
The only difference are the linear constraints in the last line of~\eqref{eq:local-polytope-dt}.
When specialized to the one-dimensional discrete tomography problem~\eqref{eq:ProjectionRay}, the difference between~\eqref{eq:local-polytope-dt} and our approach is:
for~\eqref{eq:local-polytope-dt} the tomographic projections are directly enforced through the unary marginals $\mu_u, u \in \FV$ instead of enforcing them through the counting factors and higher order ones as we did in Section~\ref{sec:one-dim-dt}.
This more simplistic relaxation~\eqref{eq:local-polytope-dt} is however less tight.

\begin{proposition}
  \label{lemma:local-polytope-not-tight}
  Relaxation~\eqref{eq:local-polytope-dt} is less tight than~\eqref{eq:dt-decomposition}.
\end{proposition}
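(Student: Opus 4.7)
The plan is to prove two things: (i) any feasible point of the primal LP whose Lagrangian dual is~\eqref{eq:dt-decomposition} projects to a feasible point of~\eqref{eq:local-polytope-dt} of equal cost, giving $\mathrm{val}(\text{STD}) \le \mathrm{val}(\text{CTG})$; and (ii) the inequality is strict on an explicit small instance.

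For (i), I would work with the primal LP underlying~\eqref{eq:dt-decomposition}. A primal feasible point consists of counting marginals $\mu_{i:j}$ and higher-order marginals $\mu_{i:j:l}$ for every ray, satisfying~\eqref{eq:CountingFactorConstraintsLeft}--\eqref{eq:CountingFactorConstraintsUp}, together with agreement of the induced endpoint marginals across rays sharing a node (this is precisely what the Lagrangian multipliers $\lambda_{i,u}$ enforce in the dual). From any such point I would extract $\mu_u(x_u)$ and $\mu_{uv}(x_u,x_v)$ by marginalizing any single counting factor containing $u$ (respectively the edge $uv$) over the remaining coordinates. The local polytope constraints in~\eqref{eq:local-polytope-dt} then follow from repeated application of~\eqref{eq:CountingFactorConstraintsLeft} and~\eqref{eq:CountingFactorConstraintsRight}. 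The tomographic equation $\sum_{u\in\FV} A_{iu}\sum_{x_u}x_u\mu_u(x_u)=b_i$ reduces to the top-level counting factor of ray $i$ having expected sum $b_i$, which holds because $\theta_{1:n}(x_{1:n})=\infty$ off the set $\{x_{u_1}+s_{1:n}+x_{u_n}=b_i\}$, forcing $\mu_{1:n}$ to be supported there. The two objectives coincide because the cost assignment prescribed in Section~\ref{sec:one-dim-dt} distributes exactly the pairwise potentials appearing in~\eqref{eq:local-polytope-dt}.

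For (ii), I would take a single ray with three binary variables $U=\{u_1,u_2,u_3\}$, label set $\{0,1\}$, constraint $x_{u_1}+x_{u_2}+x_{u_3}=2$, and TV pairwise potentials $\theta_{u_i u_{i+1}}(a,b)=\abs{a-b}$. The three integer-feasible solutions are permutations of $(1,1,0)$, of minimum cost $1$. By the exactness of the one-dimensional LP relaxation of Section~\ref{sec:one-dim-dt}, the value of~\eqref{eq:dt-decomposition} on this single-ray instance equals $1$. For~\eqref{eq:local-polytope-dt}, choose $\mu_u(0)=\tfrac{1}{3}$, $\mu_u(1)=\tfrac{2}{3}$ for each $u\in U$, together with $\mu_{u_i u_{i+1}}(0,0)=\tfrac{1}{3}$, $\mu_{u_i u_{i+1}}(1,1)=\tfrac{2}{3}$ and the off-diagonal entries zero. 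This point lies in the local polytope, satisfies $\sum_u\sum_{x_u}x_u\mu_u(x_u)=2$, and has cost $0$. Hence $\mathrm{val}(\text{STD})=0<1=\mathrm{val}(\text{CTG})$.

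The main obstacle is the bookkeeping in step (i): one must check that marginalizing different counting factors containing the same node $u$ yields the same induced $\mu_u$, and likewise for edges. This well-definedness follows by chaining~\eqref{eq:CountingFactorConstraintsLeft} and~\eqref{eq:CountingFactorConstraintsRight} down the tree of counting factors, but needs careful indexing. Everything else — verifying the local polytope constraints, the projection equation, and the strict-inequality instance — is direct computation.
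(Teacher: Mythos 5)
Your proposal is correct and follows essentially the same two-step strategy as the paper: reduce the comparison to a single one-dimensional ray, then exhibit a fractional point of~\eqref{eq:local-polytope-dt} with diagonal pairwise marginals and cost $0$ while the integral (and hence, by exactness of the counting-factor relaxation on a chain, the~\eqref{eq:dt-decomposition}) optimum is $1$; your TV example with binary labels is the same potential as the paper's Potts example, just with $b=2$, $\abs{U}=3$ instead of $b=1$, $\abs{U}>1$. The only difference is in the domination step: the paper dispatches it in one line by citing the known equivalence of~\eqref{eq:local-polytope-dt} with a Lagrangian decomposition whose subproblems use the local polytope, whereas you propose a direct primal projection from the counting-factor LP onto the local polytope --- more self-contained, at the cost of the bookkeeping you correctly flag, but not a different proof in substance.
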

\begin{proof}
  Relaxation~\eqref{eq:local-polytope-dt} is equivalent to applying it to each tomographic projection separately and then joining every subproblem by Lagrangian variables as we did with our approach above~\eqref{eq:dt-decomposition}, see~\cite[Section 1.6]{DDforInference}.
  Hence, it is enough to show that~\eqref{eq:local-polytope-dt} is not tight in the one-dimensional case~\eqref{eq:ProjectionRay}.
  We give a counter-example.
  Assume $\SX_u = \{0,1\}$ $\forall u \in U$ and we are given Potts pairwise potentials $\theta_{uv}(x_u,x_v) = \begin{cases} 0,& x_u = x_v \\ 1,& x_u \neq x_v \end{cases}$ and zero unary potentials $\theta_u \equiv 0$.
    Set unary marginals $\mu_u(1) = \frac{b}{\abs{U}}$ and $\mu_u(0) = 1 - \mu_u(1)$ $\forall u \in U$ and pairwise marginals as $\mu_{uv}(x_u,x_v) = \begin{cases} \mu_u(x_u),& x_u = x_v \\ 0,& x_u \neq x_v \end{cases}$.
      Such marginals are feasible to~\eqref{eq:local-polytope-dt}, yet give cost $0$. 
  On the other hand for e.g. $b=1$ and $\abs{U} > 1$ there must be at least one label transition, which the Potts potential penalizes with cost $1$.\qed
\end{proof}

\section{Experiments}
\label{sec:Experiments}
\label{sec:experiments}
\paragraph{Test images.}
We used $200$ randomly generated $32 \times 32$ images with three distinct intensity values $\{0,1,2\}$, examples of which can be seen in Figure~\ref{fig:RandomImages}.
Matrices $A$ for the tomographic projections were constructed as in~\cite{Roux.2014}.
For each test image we consider two tomographic problems: (i)~measuring along horizontal and vertical directions or (ii)~measuring along horizontal, vertical and two diagonal directions (left upper to right lower and left lower to right upper corner).
This gives 400 test problems in total.
Potentials for energy $E$ in~\eqref{eq:DiscreteTomography} are: unary potentials are zero, while pairwise ones are $\theta_{uv} = \abs{x_u - x_v}$ (that corresponds to TV).
Due to integrality of all costs, optimality is ascertained through a duality gap $<1$.
\newcommand{\TestSetImageWidth}{1.15cm}
\newcommand{\TestSetImageHspace}{-0.15cm}
\begin{figure}
  \includegraphics[width=\TestSetImageWidth]{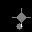}
  \hspace*{\TestSetImageHspace}
  \includegraphics[width=\TestSetImageWidth]{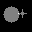}
  \hspace*{\TestSetImageHspace}
  \includegraphics[width=\TestSetImageWidth]{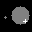}
  \hspace*{\TestSetImageHspace}
  \includegraphics[width=\TestSetImageWidth]{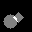}
  \hspace*{\TestSetImageHspace}
  \includegraphics[width=\TestSetImageWidth]{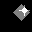}
  \hspace*{\TestSetImageHspace}
  \includegraphics[width=\TestSetImageWidth]{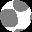}
  \hspace*{\TestSetImageHspace}
  \includegraphics[width=\TestSetImageWidth]{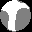}
  \hspace*{\TestSetImageHspace}
  \includegraphics[width=\TestSetImageWidth]{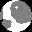}
  \hspace*{\TestSetImageHspace}
  \includegraphics[width=\TestSetImageWidth]{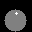}
  \hspace*{\TestSetImageHspace}
  \includegraphics[width=\TestSetImageWidth]{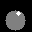}
  \caption{\label{fig:RandomImages} These images are examples from our testset of $200$ images with size $32\times 32$. Black represents value $0$, gray $1$ and white $2$. }
\end{figure}
\paragraph{Algorithms.}
We identify our solvers by a prefix \textbf{\{CTG\textbar STD\}} depending on whether~\eqref{eq:dt-decomposition} or~\eqref{eq:local-polytope-dt} is solved and by a suffix
\textbf{\{CB\textbar relax\textbar BB\}} depending on whether ConicBundle, CPLEX~\cite{cplex} or CPLEX with branch and bound enabled was utilized.
This gives in total 5 solvers: 
\textbf{CTG\_CB}, \textbf{CTG\_relax}, \textbf{CTG\_BB}, \textbf{STD\_relax} and \textbf{STD\_BB}.
We set a timelimit of 1 hour for all algorithms.

Unfortunately, CPLEX cannot solve problems larger than $32 \times 32$.
When solving the relaxation~\eqref{eq:dt-decomposition}, it already consumes multiple GB of memory for $32 \times 32$ images.
Solving~\eqref{eq:local-polytope-dt} on the other hand leads to low memory consumption, but CPLEX takes too much time for larger problems ($>1$ hour).
Hence, to have a baseline, we stick to $32 \times 32$ images.

\paragraph{Results.}
We have proved in Proposition~\ref{lemma:local-polytope-not-tight} that relaxation~\eqref{eq:local-polytope-dt} is less tight than our relaxation~\eqref{eq:dt-decomposition}. 
In fact, the first line in Table~\ref{table:comparison} shows that this occurs $350$ times.
Furthermore, our tighter relaxation also actually helps in giving optimality certificates.
In Table~\ref{table:optimality} we confirm this numerically: \textbf{STD\_relax} can provide optimality certificates $53$ times, while \textbf{CBC\_CB} and \textbf{CTG\_relax} can do so in total $205$ times.
Interestingly, when using the branch and bound capabilities of CPLEX, the picture changes and \textbf{STD\_BB} outperforms \textbf{CTG\_BB}. This is probably due to the fact that CPLEX can solve the underlying relaxation~\eqref{eq:local-polytope-dt} much faster than~\eqref{eq:dt-decomposition}. We conjecture that the picture will change if the more efficient implementation \textbf{CBC\_CB} is used as a bounds provider inside a branch and bound solver. This is however outside the scope of our work.

In Figure~\ref{fig:result} we give a detailed plot on how much our relaxation~\eqref{eq:dt-decomposition} improved upon~\eqref{eq:local-polytope-dt}.

Also, our relaxation helps in reconstructing the signal. 
Out of 238 instances, where our heuristic could find an optimal integral solution (third line in Table~\ref{table:comparison}) there were 12 cases, where only our heuristic could do so (second line in Table~\ref{table:comparison}).

\setlength\belowcaptionskip{-0.9cm}
\begin{table}
\vspace{-0.5cm}
  \caption{Number of instances where duality gap $<1$ (optimality).}
  \label{table:optimality}
  \centering
  \begin{tabular}{L{0.24\textwidth}|C{0.15\textwidth}|C{0.15\textwidth}|C{0.15\textwidth}|C{0.15\textwidth}|C{0.15\textwidth}}
    & STD relax & STD BB & CTG CB & CTG relax & CTG BB \\
    \hline
    \multirow{2}{*}{duality gap "$<1$"} & \multirow{2}{*}{$53$} & \multirow{2}{*}{$243$} & $178$ & $154$ & \multirow{2}{*}{$182$} \\
    & & & \multicolumn{2}{c|}{$205$} 
  \end{tabular}
\vspace{-0.2cm}
\end{table}
\begin{table}
\vspace{-0.8cm}
  \caption{Comparison of bounds and primal solutions obtained by~\eqref{eq:local-polytope-dt} or~\eqref{eq:dt-decomposition}.}
  \label{table:comparison}
  \centering
  \begin{tabular}{L{0.83\textwidth}|C{0.2\textwidth}}
    & \#Instances \\
    \hline
    \eqref{eq:dt-decomposition} $>$ \eqref{eq:local-polytope-dt} (our relaxation yields \textbf{strictly better} lower bound) & $350$ \\
    \hline
    our heuristic \textbf{(only)} found \textbf{optimal} integral solution & $12$ \\
    \hline
    our heuristic found \textbf{optimal} integral solution & $238$
  \end{tabular}
\end{table}

\begin{figure}
  \centering
  \includegraphics[width=0.99\linewidth]{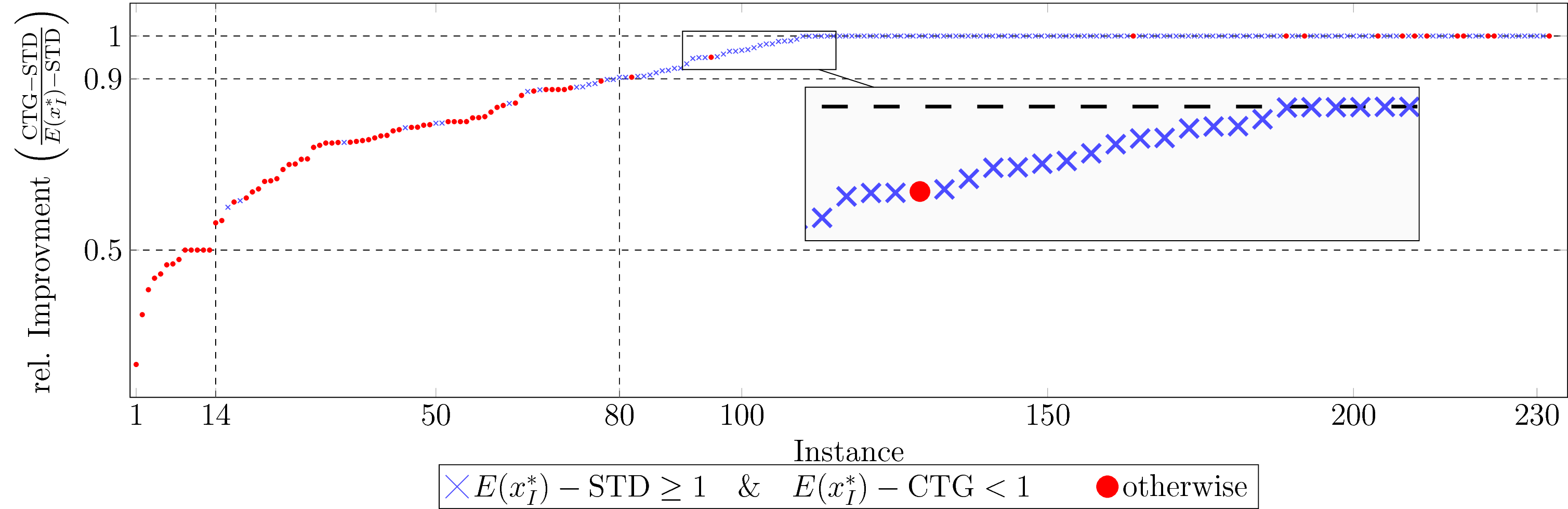}
  \caption{\label{fig:result}
  Comparison of lower bounds~\eqref{eq:local-polytope-dt} and~\eqref{eq:dt-decomposition}.
  We show relative improvement $\nicefrac{\eqref{eq:dt-decomposition} - \eqref{eq:local-polytope-dt}}{E(x^*) - \eqref{eq:local-polytope-dt}}$ on all problems, where we knew by either method the true optimal solution $E(x^*)$, but where~\eqref{eq:local-polytope-dt} is not tight ($\eqref{eq:local-polytope-dt} < E(x^*)$).
Lower bound~\eqref{eq:local-polytope-dt} was computed by \textbf{STD\_relax}, while~\eqref{eq:dt-decomposition} was computed by \textbf{CTG\_CB} and \textbf{CTG\_relax}.
  A marker close to zero means no improvement and close to one means our relaxation solved the instance exactly. 
  We marked points with a blue cross if only~\eqref{eq:dt-decomposition} but not~\eqref{eq:local-polytope-dt} achieved a duality gap $<1$, i.e. optimality, and used a red circle otherwise.
  For almost all instance we have an improvement of $0.5$ and for more than half of the instances an improvement of $0.9$.}
\end{figure}

\section{Conclusion}
We have proposed a novel convex relaxation and an accompanying algorithm for the non-binary discrete tomography problem.
We have showed theoretically and empirically that our novel relaxation is tighter than the traditionally used relaxation.
Solving our new relaxation helps in decoding tomographic reconstructions.
{\small
\bibliographystyle{abbrv}
\bibliography{literatur}
}

\end{document}